\numberwithin{equation}{section}
\newtheorem{thm}{Theorem}[section]
\newtheorem{lem}[thm]{Lemma}
\newtheorem{prop}[thm]{Proposition}
\theoremstyle{remark}
\newtheorem{rmk}[thm]{Remark}
\theoremstyle{definition}
\title{On four-dimensional anti-self-dual gradient Ricci solitons }
\author{Xiuxiong Chen and Yuanqi Wang}
\date{}
\begin{document}
\maketitle
\begin{abstract}
In this note  we prove that any  four-dimensional half conformally flat  gradient steady Ricci soliton must be  either Bryant's soliton or Ricci flat.
We also classify four-dimensional half conformally flat  gradient shrinking Ricci solitons with bounded curvature.
\end{abstract}

\section{Introduction.}
In 1982, R.Hamilton introduced the Ricci flow
in \cite{H3}, where he deforms a Riemannian metric in the direction
 of its Ricci tensor multiplied by $-2$ \[ \frac{\partial g_{ij}}{\partial t}=-2Ric_{ij}.\]
 The Ricci flow equation gives a canonical way of deforming an arbitrary
 metric to a critical metric (an Einstein metric in particular). It has been remarkably successful program over years, in particular,
 the seminal work of G. Perelman on the Poincare conjecture.
 More recently, S. Brendle and R. Schoen prove the Differentiable Sphere Theorem for pointwise $1/4$-pinched manifolds (cf. \cite{Brendle-book}).
 Note the classical sphere theorem need to assume a global $1/4$ pinched condition on curvature.
 \\
In this paper, we will study the  Ricci solitons. Following R.  Hamilton \cite{Ha95F},
 a smooth complete Riemannian manifold  $(M,g)$
is called a gradient  Ricci soliton if there exists
a smooth function $f$ on $M$  and a constant $\rho$ such that the following identity holds:
$$R_{ij}+\nabla_i\nabla_jf=\rho g. $$
 We say it's  shrinking if $\rho>0$, steady if $\rho=0$ or expanding if $\rho<0$.\\
Gradient Ricci solitons are  extensions of Einstein metrics and it plays an important role in the singularity analysis of
Hamilton's Ricci flow. \\
 In the 2-dim case, Hamilton \cite{Ha88} discovered the {\it cigar soliton}, which is
 a complete noncompact gradient steady
soliton on $\mathbb R^2$. The metric is
$$ g_{cigar}=\frac{dx\otimes dx +dy\otimes dy}{1+x^2+y^2}.$$
This soliton is positively curved and looks like a cylinder of finite
circumference at infinity.  Furthermore, Hamilton \cite{Ha88} showed that  the only
complete steady soliton on a 2-dimensional manifold with
bounded scalar curvature $R$ which attains its maximum
at a point is  the cigar soliton up to a scaling. For $n\geq 3$, R.Bryant  showed in \cite{Bryant} that
there exists  an unique complete rotationally symmetric gradient Ricci
soliton on $\Bbb R^n$ up to scaling.
 The Bryant soliton has positive curvature operator, and curvature tensor  decays in the order of $\frac{1}{r}$. \\
In 3-dimensional case, Perelman claimed in \cite{Pere02} without proof that the Bryant soliton is the
only 3-dim complete noncompact positively curved $\kappa$-noncollapsed gradient steady
soliton. It's important to understand the picture hidden behind. On this direction there are  a lot of interesting works, for example the works of S.Brendle \cite{Brendle}, B.Chen \cite{BChen} and Cao-Cheng \cite{Cao Cheng}. For $n\geq 4$,  assuming the Weyl-tensor vanishes, Cao and Chen proved in \cite{Cao Cheng} that for $n\geq 4$,  any $n$-dimensional   noncompact
gradient  steady Ricci soliton with vanishing Weyl-tensor and positive curvature operator is isometric to
the Bryant soliton (up to a scaling).\\
The classification of the complete gradient shrinking solitons is also
important since  they are the blown up limits of type-I singularities of compact Ricci flows (see \cite{Na}) and blown down limits of ancient solutions of certain types(see Perelman \cite{Pere02}). On this direction there are  a lot of interesting works, for example the works of T.Ivey \cite{Iv}, Perelman \cite{Pere}, Naber\cite{Na}, Ni-Wallach\cite{NW}, Gu-Zhu\cite{GZ}, Kotschwar\cite{Kotschwar},
Peterson-Wylie\cite{PW}.

The problem  we consider is the classification of 4-dimensional half conformally flat steady and shrinking gradient Ricci solitons. Half conformally flat metrics are also known as self-dual or anti-self-dual if $W^- = 0$
or $W^+ = 0$ , respectively. Without loss of generality, we can assume the soliton is anti-self-dual. Our main results are :
\begin{thm}\label{classification of steady soliton}Any 4-dim complete  gradient steady  Ricci soliton with $W^+ = 0$
 must be isometric to one of the following two types:
\begin{enumerate}
  \item The Bryant Soliton (up to a scaling).
  \item A  manifold which is anti-self-dual and  Ricci flat.
\end{enumerate}
\end{thm}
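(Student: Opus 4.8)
The plan is to reduce the theorem to a classification of \emph{Bach-flat} gradient steady solitons and then run a rigidity argument using the Cotton-type structure tensor of a soliton.

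\emph{Step 1 (reduction to Bach-flatness).} In dimension four any metric with $W^{+}=0$ is automatically Bach-flat; this is a pointwise fact. The Bach tensor $B$ is (up to a universal nonzero constant) the gradient of the Weyl functional $g\mapsto\int_{M}|W_{g}|^{2}\,dV_{g}$, and $(|W^{+}_{g}|^{2}-|W^{-}_{g}|^{2})\,dV_{g}$ is a constant multiple of the first Pontryagin form of $g$, so its first variation is a total divergence. Hence the first variation of $|W^{+}_{g}|^{2}\,dV_{g}$ equals that of $\tfrac12|W_{g}|^{2}\,dV_{g}$ modulo an exact term, i.e.\ it is $\tfrac{c}{2}\langle B,h\rangle\,dV_{g}+d(\cdots)$. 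Since $W^{+}\equiv0$ makes $\int_{M}|W^{+}_{g_{t}}|^{2}$ minimal at $t=0$ along every compactly supported variation $g_{t}=g+th$, integrating this identity and applying Stokes gives $\int_{M}\langle B,h\rangle\,dV_{g}=0$ for all such $h$, whence $B\equiv0$. (One can instead verify $W^{+}=0\Rightarrow B=0$ directly from the $\mathrm{SO}(4)$-decomposition of the curvature operator.)

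\emph{Step 2 (vanishing of the $D$-tensor).} Following Cao--Chen, attach to the soliton the three-tensor $D_{ijk}$ assembled from the Ricci tensor, the scalar curvature $R$ and $\nabla f$. On any gradient Ricci soliton it satisfies two identities: $B$ is a divergence of $D$ plus terms quadratic in $W$ and in the Ricci tensor, and $D_{ijk}$ coincides, up to a constant, with $W_{ikj\ell}\nabla^{\ell}f$, because the soliton equation rewrites the Cotton tensor as $W(\cdot,\cdot,\cdot,\nabla f)$. Pairing the first identity with $\nabla f$ and integrating, one wants to conclude that a weighted $L^{2}$-norm of $D$ vanishes, hence $D\equiv0$ and in particular $W(\cdot,\cdot,\cdot,\nabla f)\equiv0$. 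Here one uses Hamilton's identity
\[
R+|\nabla f|^{2}\equiv\text{const}
\]
and $R\ge0$ on a steady soliton to bound $R$ and $|\nabla f|$, and the consequent linear growth of $f$. The difficulty is that, unlike the shrinking case, where the Gaussian weight $e^{-f}$ makes all integrands decay rapidly, on a steady soliton $e^{-f}$ \emph{grows} (exponentially, along the Bryant model) while curvature decays only polynomially, so the integration by parts must be reorganised, e.g.\ carried out against the weight $e^{f}$ with cutoff functions adapted to the level sets of $f$, or replaced by a maximum-principle argument. \textbf{This is the main obstacle.}

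\emph{Step 3 (rigidity).} If the soliton is Ricci-flat, it is anti-self-dual and Ricci-flat, which is alternative (2). So assume the Ricci tensor is not identically zero; then $\nabla f\not\equiv0$, and since a gradient Ricci soliton is real-analytic, $\Omega:=\{\nabla f\neq0\}$ is open and dense. On $\Omega$ the vanishing of $D$, together with $R_{ij}+\nabla_{i}\nabla_{j}f=0$, forces the level sets of $f$ to be totally umbilical with Einstein induced metric; being three-dimensional, these level sets have constant curvature, so $g$ is locally a rotationally symmetric warped product $dr^{2}+\varphi(r)^{2}g_{S^{3}}$, hence locally conformally flat on $\Omega$ and, by analyticity, on all of $M$. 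Since a complete steady soliton that is not Ricci-flat has $R>0$ everywhere and a single critical point of $f$, $M$ is diffeomorphic to $\mathbb{R}^{4}$ and globally rotationally symmetric, so by the uniqueness of the rotationally symmetric steady soliton recalled in the introduction, $g$ is the Bryant soliton up to scaling. This yields exactly the two alternatives, the delicate points being the integration by parts of Step 2 and the accompanying local analysis of $g$ near the critical point of $f$.
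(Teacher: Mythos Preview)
Your Step 2 is indeed a real gap, not just a technical nuisance. The Bach-flat integral identity of Cao--Chen that produces $\int |D|^{2}e^{-f}=0$ relies on the Gaussian weight in the shrinking case; on a steady soliton $f$ grows only linearly and curvature decays like $r^{-1}$ on the model, so no choice of sign in $e^{\pm f}$ makes the boundary terms vanish without further input. There is also a second, independent gap in Step 3: you invoke real-analyticity of a gradient soliton to pass from ``$W=0$ on $\{\nabla f\neq0\}$'' to ``$W\equiv0$'', but analyticity is only known (via Bando) under a global curvature bound, which the theorem does \emph{not} assume. The paper explicitly flags this as open and avoids it.

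The paper's route is far more elementary and entirely pointwise. At any $q$ with $\nabla f(q)\neq0$, set $e_{1}=\nabla f/|\nabla f|$. Writing the Cotton tensor of a soliton as $C_{ijk}=-W_{ijkl}f_{l}+B_{ijk}$, where $B$ is a simple algebraic expression in $\mathrm{Ric}$, $R$, $\nabla f$, the hypothesis $W^{+}=0$ gives $B_{12k}+B_{34k}=0$ (and cyclic), and a short computation forces $B\equiv0$; this already yields that $\mathrm{Ric}$ has $e_{1}$ as eigenvector and is a multiple of the identity on $e_{1}^{\perp}$. It follows that the level sets of $f$ are totally umbilic with constant mean curvature, whence $R_{1abc}=0$ for $a,b,c\in\{2,3,4\}$; the curvature decomposition then gives $W^{-}_{1abc}=0$, and the anti-self-dual identities $W^{-}_{12kl}=-W^{-}_{34kl}$ etc.\ kill the remaining components of $W^{-}$. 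Thus $W(q)=0$ whenever $\nabla f(q)\neq0$. If this set is dense, continuity gives $W\equiv0$ and Cao--Chen's locally conformally flat classification finishes. If not, $\nabla f$ vanishes on a ball; then $R=0$ there, so the constant in $R+|\nabla f|^{2}=\text{const}$ is zero, and B.-L.~Chen's $R\ge0$ forces $R\equiv0$, hence $\nabla f\equiv0$ and the metric is Ricci-flat. No integration, no analyticity, no curvature bound.
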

\begin{thm}\label{classification of shrinking soliton}Any 4-dim complete   gradient shrinking  Ricci soliton with bounded curvature and $W^+ = 0$
 must be isometric to  finite quotients of $R^4,\ S^3\times R,\ S^4$ or $CP^2$.
\end{thm}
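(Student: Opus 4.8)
The plan is to reduce the classification to the Bach-flat case, extract a vanishing theorem from a weighted integral identity, and then invoke two rigidity results. Normalize $\rho=\tfrac12$, so that $R_{ij}+\nabla_i\nabla_j f=\tfrac12 g_{ij}$; this gives the usual identities $\Delta f=2-R$, $\nabla_i R=2R_{ij}\nabla^j f$ and $R+|\nabla f|^2-f=\mathrm{const}$, and in addition $R\ge0$ and, by Cao--Zhou, $f$ grows quadratically, so that under the bounded-curvature hypothesis all the weighted integrals below converge. The key algebraic input is the classical fact that a four-dimensional metric with $W^+=0$ is automatically Bach-flat: up to a constant the Bach tensor is the gradient of the conformally invariant functional $g\mapsto\int|W^+|^2$, which a metric with $W^+\equiv0$ minimizes, hence it is a critical point; equivalently one uses the pointwise second-order formula for $B$ in terms of $W^+$ alone. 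Thus it suffices to classify four-dimensional complete Bach-flat gradient shrinking solitons with bounded curvature.

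For that step I would adapt the three-tensor machinery developed for Bach-flat solitons (Cao--Chen). Introduce the covariant $3$-tensor $D_{ijk}$ assembled from $\nabla f$, the traceless Ricci tensor $E_{ij}=R_{ij}-\tfrac{R}{4}g_{ij}$ and $R$, designed so that it coincides with the Cotton tensor plus a constant multiple of the contraction $W_{ijkl}\nabla^l f$; a direct computation on a gradient shrinking soliton then expresses the Bach tensor through the divergence of $D$ and the Cotton tensor contracted with $\nabla f$. Pairing this identity with $E$, multiplying by the weight $e^{-f}$ and integrating by parts --- legitimate precisely because of the quadratic growth of $f$ and the curvature bound --- yields $0\le\int_M|D|^2 e^{-f}\,dV=c\int_M\langle B,E\rangle e^{-f}\,dV=0$, hence $D\equiv0$ on $M$.

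It remains to analyze shrinking solitons with $D\equiv0$. On the open set $\{\nabla f\ne0\}$ the vanishing of $D$ forces, via the soliton equation, that $\nabla f$ is a Ricci eigenvector, that the level sets of $f$ are umbilic with constant mean curvature, and that they carry an Einstein --- hence, being three-dimensional, constant-curvature --- induced metric, so that $g=dr^2+\phi(r)^2 g_N$ there with $f=f(r)$; such a warped product over a space form is locally conformally flat. Therefore either $\nabla f\equiv0$, in which case $(M,g)$ is Einstein, or $\{\nabla f\ne0\}$ is nonempty, and then $W$, being real-analytic and vanishing on a nonempty open set, vanishes identically. In the Einstein case the shrinking normalization forces $R_{ij}=\tfrac12 g_{ij}$, so $M$ is compact by Bonnet--Myers, and a compact half conformally flat Einstein four-manifold of positive scalar curvature is isometric, up to scaling and a reversal of orientation, to $S^4$ or $CP^2$ with their symmetric metrics by Hitchin's theorem. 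In the locally conformally flat case, the classification of complete locally conformally flat gradient shrinking solitons with bounded curvature (Ni--Wallach \cite{NW}, Petersen--Wylie \cite{PW}, and subsequent refinements; see also Kotschwar \cite{Kotschwar} for the rotationally symmetric reduction) shows that $M$ is a finite quotient of $R^4$, $S^3\times R$ or $S^4$. Combining the two cases gives exactly the list in the statement.

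The hard part is the passage from $D\equiv0$ to the dichotomy ``Einstein or locally conformally flat.'' One must use the block decomposition of the curvature operator along $\Lambda^2=\Lambda^+\oplus\Lambda^-$ --- the hypothesis $W^+=0$ meaning exactly that the self-dual block is $\tfrac{R}{12}\,\mathrm{Id}$ --- to pin down the warped-product structure and the Ricci-eigenvalue configuration on $\{\nabla f\ne0\}$, and then argue carefully across the critical set of $f$, where analyticity of the soliton enters, to pass from the local picture to a global one. A secondary but genuine technical point, and the only place the bounded-curvature hypothesis is actually consumed, is the justification of the weighted integration by parts used to produce $D\equiv0$.
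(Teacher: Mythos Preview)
Your argument is correct but takes a genuinely different route from the paper's. The paper never passes through Bach-flatness or any weighted integral identity; it works pointwise. At each $q$ with $\nabla f(q)\ne0$ the paper writes the Cotton tensor as $C_{ijk}=-W_{ijkl}f_l+B_{ijk}$, where $B_{ijk}$ is built from $\nabla f$, $Ric$ and $R$ alone, and then uses the three self-dual relations $W^+_{12kl}+W^+_{34kl}=0$ (and cyclic) together with $C_{ijk}=-\tfrac12\nabla^l W_{ijkl}$ to force $B_{12k}+B_{34k}=0$ (and cyclic); a short algebraic argument with a diagonalizing frame then gives $B\equiv0$ at $q$, hence the two-eigenvalue Ricci structure, hence $R_{1abc}=0$ for $a,b,c\in\{2,3,4\}$, and finally the anti-self-dual relations $W^-_{12kl}=-W^-_{34kl}$ (and cyclic) kill $W^-$ at $q$. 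So $W^+=0$ plus $\nabla f(q)\ne0$ gives $W(q)=0$ directly, with no integration and no $D$-tensor. What your Bach-flat route buys is generality --- it applies verbatim to any Bach-flat shrinker --- at the price of the Cao--Chen machinery, which in fact postdates this paper; what the paper's route buys is an elementary, local computation that exposes the algebraic mechanism by which half of the Weyl tensor, combined with the soliton equation, annihilates the other half. One small correction to your last paragraph: the bounded-curvature hypothesis is \emph{not} consumed only in the integration by parts --- you also invoke real-analyticity of $W$ to propagate $W=0$ across the critical set of $f$, and that rests on Bando's theorem, which requires bounded curvature; this is in fact the \emph{only} place the paper uses it.
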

\begin{rmk}Obviously if we change the condition $W^+=0$ to $W^-=0$, the conclusions are exactly the same. The proofs of  Theorem \ref{classification of steady soliton} and \ref{classification of shrinking soliton} both originate from a wonderful idea in Cao-Chen \cite{Cao Cheng}. On Theorem \ref{classification of steady soliton}, note that we don't require the curvature to be bounded globally. In case (2) a complete picture is still lacking, one can see Remark \ref{Ricci flat antiselfdual manifolds}.\\
 On Theorem \ref{classification of shrinking soliton}, the reason we need the curvature to be  uniformly bounded is that we want the metric to be analytic. The fact that Ricci flows with uniformly bounded curvature are analytic when $t>0$ is proved by Bando in \cite{Bando}. We strongly conjecture that Ricci Solitons should all be analytic without any curvature condition.\\
 Notice that Theorem \ref{classification of shrinking soliton} implies that the K\"ahler shrinking Ricci soliton on $CP^2\sharp\overline{CP^2}$(  found by Koiso \cite{Koiso} and  Cao \cite{Cao's contruction of Soliton}), and  the soliton on $CP^2\sharp 2\overline{CP^2}$( found by Wang and Zhu   \cite{Wang-Zhu})  are not either anti-self-dual or self-dual.
\end{rmk}
\textbf{Acknowledgements}: Both authors  are grateful to
Bing Wang for observing Lemma \ref{unique continuation of steady soliton} and lots of helpful discussions. The second author  wants to thank Prof Gang Tian for helpful suggestions. The second author also benefits a lot from communication with Prof Xiaohua Zhu.

\section{Proof of Theorem \ref{classification of steady soliton}.}
From now on we  assume $(M,g)$ is anti-self-dual, i.e  for some corrrect oriented orthonomal basis  $e_1,\ e_2,\ e_3,\ e_4$ we have
 \begin{equation}\label{definition of anti-self-dual}W^{-}_{12kl}=-W^{-}_{34kl},\ W^{-}_{13kl}=-W^{-}_{42kl},\ W^{-}_{14kl}=-W^{-}_{23kl}\end{equation}
  In this section we assume that $(M,g,f)$ satisfies $R_{ij}+\nabla_i\nabla_j f =0$. Our crucial observation is that for Ricci solitons,  the condition $W^{+}=0$ actually implies that the full Weyl tensor vanishes at those points where $\nabla f\neq 0$.
\begin{lem}\label{Ric has at most two eigenvalues}  Suppose $(M,g)$ is an anti-self-dual steady  Ricci soliton. Suppose $q\in M$ is not a critical point of $f$ i.e  $|\nabla f|(q)\neq 0$. Then at the point $q$,    $e_1=\nabla f /|\nabla f |$ is an eigenvector of $Ric$.
Furthermore, there exists two constants $\lambda$ and $\mu$ such that for any orthonormal basis $e_2,\ e_3,\ e_4$ tangent to the level surface of $f$  at $p$, the
Ricci tensor has the following properties:
\begin{enumerate}
  \item  $Ric(e_1, e_1)=\lambda$,

  \item $Ric(e_i, e_j)=\mu\delta_{ij}, \  i, j=2, 3, 4$.
  \item
 $Ric(e_1, e_j)=R_{1j}=0, \ j=2, 3, 4$,
\end{enumerate}
\end{lem}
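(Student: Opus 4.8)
The plan is to extract the conclusion from the anti-self-duality condition \eqref{definition of anti-self-dual} by routing it through the Cotton tensor together with the standard steady-soliton identities. First I would record the identities for a gradient steady soliton $R_{ij}+\nabla_i\nabla_jf=0$: after normalizing so that $R+|\nabla f|^2\equiv\mathrm{const}$, one has $\nabla_iR=2R_{il}\nabla^lf$, and commuting covariant derivatives in the soliton equation gives $\nabla_iR_{jk}-\nabla_jR_{ik}=R_{ijkl}\nabla^lf$. Writing the curvature tensor as $R_{ijkl}=W_{ijkl}+g_{ik}A_{jl}+g_{jl}A_{ik}-g_{il}A_{jk}-g_{jk}A_{il}$ with $A$ the Schouten tensor, contracting the last index against $\nabla f$, and using $R_{il}\nabla^lf=\tfrac12\nabla_iR$ to convert every curvature--gradient contraction into a gradient of $R$, the soliton identity above takes the form
\begin{equation}\label{eq:plan-cotton}
C_{ijk}=W_{ijkl}\nabla^lf+E_{ijk},
\end{equation}
where $C_{ijk}:=\nabla_iR_{jk}-\nabla_jR_{ik}-\tfrac{1}{2(n-1)}(g_{jk}\nabla_iR-g_{ik}\nabla_jR)$ is the Cotton tensor and $E_{ijk}$ is an explicit tensor assembled algebraically out of $\mathrm{Ric}$, $\nabla f$ and $g$, containing no covariant derivatives of the Ricci tensor. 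Nothing up to here uses $W^+=0$.

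Next I would bring in the hypothesis $W^+=0$ in two ways. Viewing $W$ as an endomorphism of $\Lambda^2$, $W^+=0$ means $W$ annihilates every self-dual $2$-form and carries $\Lambda^2$ into $\Lambda^-$; consequently, for each fixed pair of slots, the $2$-form obtained from $W$ in the complementary pair is anti-self-dual, and in particular $W_{ijkl}\nabla^lf$ is, for each fixed $k$, anti-self-dual in $(i,j)$. On the other hand, in dimension four the divergence of the Weyl tensor obeys the second Bianchi identity $\nabla^lW_{ijkl}=\tfrac{n-3}{n-2}C_{ijk}$, which is non-trivial precisely because $n=4>3$; since the $\Lambda^{\pm}$-splitting of $2$-forms is parallel, $\nabla^lW_{ijkl}$ --- and hence $C_{ijk}$ --- is also anti-self-dual in $(i,j)$. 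Subtracting these two facts inside \eqref{eq:plan-cotton} forces $E_{ijk}$ to be anti-self-dual in $(i,j)$ as well; equivalently, the self-dual part of $E$ in the index pair $(i,j)$ vanishes identically. This is now a purely pointwise, purely algebraic constraint on $\mathrm{Ric}$ and $\nabla f$.

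Finally I would unwind this vanishing at the given point $q$ in the adapted orthonormal frame $e_1=\nabla f/|\nabla f|$, $e_2,e_3,e_4$ tangent to the level set, in which $\nabla_jf=|\nabla f|\,\delta_{1j}$, so that all gradient contractions reduce to entries of $\mathrm{Ric}(q)$. Pairing the self-dual part of $E$ against each of the three self-dual $2$-forms $e^1\wedge e^2+e^3\wedge e^4$, $e^1\wedge e^3+e^4\wedge e^2$, $e^1\wedge e^4+e^2\wedge e^3$, and letting the free index $k$ run over $1,2,3,4$, produces a small system of linear equations among the $R_{ij}(q)$. Solving it returns $R_{1j}=0$ for $j=2,3,4$, so $e_1$ is a Ricci eigenvector, together with $R_{jk}=0$ for distinct $j,k\in\{2,3,4\}$ and $R_{22}=R_{33}=R_{44}$; putting $\lambda=R_{11}(q)$ and $\mu=R_{22}(q)$ then gives the three asserted properties, and since the $\mu$-eigenspace is the entire tangent space of the level set of $f$ the choice of orthonormal basis $e_2,e_3,e_4$ is immaterial. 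I expect the one step needing real care to be the structural claim that the Cotton tensor is anti-self-dual in $(i,j)$ --- this is where $n=4$ and $W^+=0$ are both used essentially --- together with the observation that the self-dual projection in \eqref{eq:plan-cotton} kills exactly the Weyl term; granting these, the remainder is bookkeeping.
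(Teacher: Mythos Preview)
Your proposal is correct and follows essentially the same route as the paper: both derive the identity $C_{ijk}=-W_{ijkl}f_l+B_{ijk}$ from the soliton equations, use $W^+=0$ together with $C_{ijk}=-\tfrac{n-3}{n-2}\nabla^lW_{ijkl}$ to force the self-dual part of $B$ in $(i,j)$ to vanish, and then read off the Ricci constraints from this algebraic condition. The only cosmetic difference is the frame in which the final linear algebra is carried out---the paper first diagonalizes $\mathrm{Ric}$ and then locates $\nabla f$ among the eigenvectors, whereas you work directly in the adapted frame $e_1=\nabla f/|\nabla f|$; both lead to the same system and the same conclusion.
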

\begin{proof}{of lemma \ref{Ric has at most two eigenvalues}:}\\ Let $n=4$. First we recall the definition of the Cutton tensor:
 $$C_{ijk}=-\frac{n-3}{n-2}W_{ijkl,l}$$ and
$$C_{ijk}=R_{jk,i}-R_{ik,j}-\frac{1}{2(n-1)}(R_ig_{jk}-R_jg_{ik})$$
Using  $R_{jk}+f_{jk}=0$, we have
\begin{eqnarray*}& &C_{ijk}=f_{ikj}-f_{jki}-\frac{1}{2(n-1)}(R_ig_{jk}-R_jg_{ik})
\\& &=-R_{ijkl}f_l-\frac{1}{2(n-1)}(R_ig_{jk}-R_jg_{ik})
\end{eqnarray*}
Using \begin{eqnarray*}& &R_{ijkl}
\\&=&W_{ijkl}+\frac{1}{n-2}(R_{ik}g_{jl}+R_{jl}g_{ik}-R_{il}g_{jk}-R_{jk}g_{il})
\\& &-\frac{R}{(n-1)(n-2)}(g_{jl}g_{ik}-g_{il}g_{jk})\end{eqnarray*}
We get
\begin{eqnarray*}& & C_{ijk}=
-W_{ijkl}f_l-\frac{1}{n-2}(R_{ik}g_{jl}+R_{jl}g_{ik}-R_{il}g_{jk}-R_{jk}g_{il})f_l\\&+&\frac{R}{(n-1)(n-2)}(g_{jl}g_{ik}-g_{il}g_{jk})f_l
-\frac{1}{2(n-1)}(R_ig_{jk}-R_jg_{ik})
\\&=&-W_{ijkl}f_l-\frac{1}{n-2}(R_{ik}f_j-R_{jk}f_i)+\frac{1}{2(n-2)}(g_{jk}R_i-g_{ik}R_j)\\&+&\frac{R}{(n-1)(n-2)}(f_jg_{ik}-f_ig_{jk})
-\frac{1}{2(n-1)}(R_ig_{jk}-R_jg_{ik})
\end{eqnarray*}
Using $2R_{jl}f_l=R_j$, we get
\begin{eqnarray*}& &C_{ijk}
\\&=&-W_{ijkl}f_l-\frac{1}{n-2}(R_{ik}f_j-R_{jk}f_i)+\frac{1}{(n-2)(n-1)}(g_{jk}R_{il}f_l-g_{ik}R_{jl}f_l)\\&+&\frac{R}{(n-1)(n-2)}(f_jg_{ik}-f_ig_{jk})
\end{eqnarray*}
Since   $\nabla f|_{p}\neq0$, suppose $f(p)=c$.
We denote \begin{eqnarray*}& &B_{ijk}
\\&=&-\frac{1}{n-2}(R_{ik}f_j-R_{jk}f_i)+\frac{1}{(n-2)(n-1)}(g_{jk}R_{il}f_l-g_{ik}R_{jl}f_l)\\&+&\frac{R}{(n-1)(n-2)}(f_jg_{ik}-f_ig_{jk})\end{eqnarray*}
Choose  $e_1,e_2,e_3,e_4\ $ which diagonalize $Ric$ at $q$. Assuming $W^{+}=0$, we get that:
\begin{eqnarray*}& &B_{12k}+B_{34k}
\\&=&C_{12k}+C_{34k}+(W_{12kl}+W_{34kl})f_l
\\&=&-\frac{n-3}{n-2}(W_{12kl}+W_{34kl})_{,l}+(W_{12kl}+W_{34kl})f_l
\\&=&0\end{eqnarray*}
Similarly we also have $B_{13k}+B_{42k}=0,\ B_{14k}+B_{23k}=0$.
By definition we see that $B_{ijk}=0$ if $i,j,k$ are mutually different. So we have:
$$B_{iji}\equiv 0, \textrm{for all}\ i,j=1,2,3,4.$$
Furthermore by definition of $B$ again we have  $B_{iij}=0$. Then we see that $B\equiv 0$.\\ Now we denote the eigenvalue of $Ric$ with respect to $e_k,\ k= 1,2,3,4$ to be $\lambda_k,\ k=1,2,3,4.$ respectively. \\ Now let $n=4$,  take $B_{121}$ as an example,  we have
$$0=-\frac{1}{2}(\lambda_1f_2)-\frac{1}{6}f_2\lambda_2+\frac{R}{6}f_2=\frac{f_2}{6}(R-3\lambda_1-\lambda_2)$$
Which means we should have $$\frac{f_i}{6}(R-3\lambda_j-\lambda_i)=0,\ \textrm{for all}\ i\neq j.$$
Thus if $\nabla f$ has more than two nonzero component, we have $\lambda_1=\lambda_2=\lambda_3=\lambda_4$.
If $\nabla f$ has only one nonzero component, say $f_1$, then $\lambda_2=\lambda_3=\lambda_4$. In either case, $\nabla f$ is an eigenvector of $Ric$ and $Ric|_{(\nabla f)^{\perp}}=\mu \delta{ab}$.
\end{proof}
Next let $K_c=\{f(p)=c\}$ be the level set of $f$. The proof of the next lemma is essentially the same as lemma 3.3 in \cite{Cao Cheng}.
\begin{lem}\label{constancy of mean curvature} Suppose $(M,g)$ is an anti-self-dual steady gradient Ricci soliton. Suppose $q\in M$ is not a critical point of $f$ i.e  $|\nabla f|(q)\neq 0$. Then near the point $p$ we have that the second fundamental form $\Pi_{ij}$ of $K_c$ is a constant multiple of the of the metric i.e  $\Pi_{ij}=\frac{\eta}{3} g_{ij}$, in which $\eta$ is the mean curvature. Furthermore,  $\eta$ is constant on $K_c$.
\end{lem}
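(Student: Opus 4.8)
The plan is to establish the two assertions separately, following the scheme of Lemma 3.3 in \cite{Cao Cheng}: total umbilicity of $K_c$ is an immediate consequence of Lemma \ref{Ric has at most two eigenvalues}, while constancy of $\eta$ along $K_c$ follows from the standard structural identities for steady gradient Ricci solitons.

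For the first assertion, I would work on a neighborhood $U$ of the non-critical point on which $\nabla f\neq 0$; there $f$ is a submersion, $K_c$ is an embedded hypersurface, and $\nu=\nabla f/|\nabla f|$ is a unit normal. In a local orthonormal frame $e_1=\nu,\,e_2,\,e_3,\,e_4$ adapted to $K_c$, the second fundamental form is
\[
\Pi_{ij}=\langle\nabla_{e_i}\nu,e_j\rangle=\frac{1}{|\nabla f|}\,\nabla_i\nabla_j f,\qquad i,j\in\{2,3,4\},
\]
the contribution of differentiating $1/|\nabla f|$ dropping out because $\langle\nabla f,e_j\rangle=0$. Since $\nabla_i\nabla_j f=-R_{ij}$ and, by Lemma \ref{Ric has at most two eigenvalues}, $R_{ij}=\mu\delta_{ij}$ for $i,j\in\{2,3,4\}$, we get $\Pi_{ij}=-\frac{\mu}{|\nabla f|}\delta_{ij}$. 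Hence $K_c$ is totally umbilic, with mean curvature $\eta=\Pi_{22}+\Pi_{33}+\Pi_{44}=-\frac{3\mu}{|\nabla f|}$, i.e. $\Pi_{ij}=\frac{\eta}{3}g_{ij}$.

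For the second assertion I would first recall three facts valid on any steady soliton $R_{ij}+\nabla_i\nabla_j f=0$: tracing gives $\Delta f=-R$; taking the divergence of the soliton equation and combining the contracted second Bianchi identity with the Bochner commutation formula $\Delta\nabla_i f=\nabla_i\Delta f+R_{ij}\nabla_j f$ gives $R_{ij}\nabla_j f=\tfrac12\nabla_i R$; and differentiating once more yields that $R+|\nabla f|^2$ is constant. Using $R_{ij}\nabla_j f=\tfrac12\nabla_i R$ together with part (3) of Lemma \ref{Ric has at most two eigenvalues} ($R_{1j}=0$ for $j\neq 1$), the derivative of $R$ in any direction $e_a$ tangent to $K_c$ is $\nabla_a R=2R_{aj}\nabla_j f=2|\nabla f|\,R_{a1}=0$, and then $\nabla_a|\nabla f|^2=-\nabla_a R=0$ as well. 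Thus $R$ and $|\nabla f|$ are constant on each level set $K_c$.

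Finally, since $R$ is constant on the level sets of the submersion $f$, locally $R=R(f)$, so $\nabla R=R'(f)\nabla f$. Contracting $R_{ij}\nabla_j f=\tfrac12\nabla_i R$ with $\nabla f$ and using $Ric(\nu,\nu)=\lambda$ gives $\lambda|\nabla f|^2=\tfrac12\langle\nabla R,\nabla f\rangle=\tfrac12 R'(f)|\nabla f|^2$, hence $\lambda=\tfrac12 R'(f)$ depends only on $f$; consequently $\mu=\tfrac13(R-\lambda)$ and $|\nabla f|$ also depend only on $f$, so both are constant on $K_c$, and therefore $\eta=-\frac{3\mu}{|\nabla f|}$ is constant on $K_c$. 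I do not expect any serious obstacle; the only point requiring a word of justification is the implication ``$R$ constant on level sets $\Rightarrow R=R(f)$ locally'', which is legitimate precisely because we are in the region where $\nabla f\neq 0$, and everything else is bookkeeping with the soliton identities.
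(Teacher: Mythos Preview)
Your argument is correct. The first assertion (total umbilicity) is handled exactly as in the paper: writing $\Pi_{ij}=\frac{1}{|\nabla f|}\nabla_i\nabla_j f=-\frac{R_{ij}}{|\nabla f|}$ on the tangential block and invoking Lemma~\ref{Ric has at most two eigenvalues}.

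For the constancy of $\eta$ you take a genuinely different route from the paper. The paper applies the Codazzi equation
\[
R_{1ijk}=-\nabla_j^{T}\Pi_{ki}+\nabla_k^{T}\Pi_{ji},\qquad i,j,k\in\{2,3,4\},
\]
contracts $i$ with $k$, and uses umbilicity $\Pi=\tfrac{\eta}{3}g$ together with $R_{1j}=0$ to read off $-\tfrac{2}{3}\nabla_j^{T}\eta=0$ in one line. Your approach instead exploits the soliton identities $2R_{ij}f_j=\nabla_i R$ and $R+|\nabla f|^2=\mathrm{const}$: from $R_{1a}=0$ you deduce that $R$ and $|\nabla f|$ are constant along $K_c$, then pass through the local factorisation $R=R(f)$ to conclude $\lambda=\tfrac12 R'(f)$ and hence $\mu$, $\eta$ depend only on $f$. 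This is sound (the factorisation is legitimate exactly because $\nabla f\neq 0$, as you note), but it is a longer detour. The paper's Codazzi computation is shorter and has a useful by-product: it immediately yields $R_{1ijk}=0$ for $\{i,j,k\}=\{2,3,4\}$, which is precisely the input needed in the proof of Lemma~\ref{nondegenerace implies W vanishes}. Your route does not produce this fact directly, so if you follow your argument you would still need to derive $R_{1ijk}=0$ separately (e.g.\ from umbilicity plus $\nabla^T\eta=0$, or from the curvature decomposition) before proceeding to the next lemma.
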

\begin{proof}{of Lemma \ref{constancy of mean curvature}:}\\ From Lemma \ref{Ric has at most two eigenvalues} we can set $e_1=\frac{\nabla f}{|\nabla f|}$. First notice that $\nabla f\neq 0$ at $q$ implies that there exists a coordinate system $\psi_q$ near $q$ such that $f$ is one of coordinate functions of $\psi_q$.  Suppose $i,j\neq 1$, restricted to the distribution $\nabla f^{\bot}$, we compute \begin{eqnarray*}& &\Pi_{ij}=\frac{\nabla\nabla f}{|\nabla f|}=\frac{-R_{ij}}{|\nabla f|}
=\frac{-\frac{R-R_{11}}{3}}{|\nabla f|}g_{ij}\end{eqnarray*}
\end{proof}
Thus the mean curvature satisfies $\eta=\frac{-R+R_{11}}{|\nabla f|}$.

Next we show that $\eta$ is a constant along fiber direction. Since \ref{Ric has at most two eigenvalues} holds, then direct computation shows that  for  $i, j, k=2, 3, 4$ we have
$$R_{1ijk}=-\nabla_j^{T}\Pi_{ki}+\nabla_k^{T}\Pi_{ji}.$$ Contracting $i$ and $k$, we obtain
$$0=R_{1j}=-\nabla_j^{T} \eta+\nabla_k^{T}\Pi_{jk}=-\frac{2}{3}\nabla_j^{T} \eta,$$
Since j is arbitrary,  $\eta$ is constant over $K_c$.

\begin{lem}\label{nondegenerace implies W vanishes}Let $(M,g,f)$ be a 4-dim  steady Ricci soliton   and $W^+=0$. Suppose $q\in M$ and $|\nabla f|^2(q)\neq 0$, then  $W= 0$ at $q$.
\end{lem}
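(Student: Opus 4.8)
\noindent\emph{Proof proposal.} The plan is to show directly that, in an orthonormal frame at $q$ adapted to $\nabla f$, \emph{every} component of the Weyl tensor vanishes. First I would fix an orthonormal frame $e_1,\dots ,e_4$ at $q$ with $e_1=\nabla f/|\nabla f|$; by Lemma \ref{Ric has at most two eigenvalues} this $e_1$ is a Ricci eigenvector, $R_{1j}=0$ for $j=2,3,4$, and $Ric$ is diagonal, while $e_2,e_3,e_4$ span the tangent space of the level set $K_c$ through $q$. After possibly permuting $e_2,e_3,e_4$ we may assume the frame is positively oriented (which does not affect the Ricci structure), so that the anti-self-duality relations \eqref{definition of anti-self-dual} hold for $W$ itself, since $W^+=0$.

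The first substantive step is to kill all Weyl components of the form $W_{1abc}$ with $a,b,c\in\{2,3,4\}$. Expanding $W_{1abc}$ through the decomposition of the curvature tensor into Weyl, Ricci and scalar parts (as recalled above), every Ricci- and scalar-curvature correction term carries a factor $g_{1b}$, $g_{1c}$, $R_{1b}$ or $R_{1c}$, all of which vanish (the metric factors because $b,c\neq 1$, the Ricci factors by Lemma \ref{Ric has at most two eigenvalues}); hence $W_{1abc}=R_{1abc}$. On the other hand, by Lemma \ref{constancy of mean curvature} the second fundamental form of $K_c$ is $\Pi=\tfrac{\eta}{3}g$ with $\eta$ constant on $K_c$, so that $\nabla^{T}\Pi\equiv 0$; substituting this into the Codazzi identity $R_{1abc}=-\nabla^{T}_{b}\Pi_{ca}+\nabla^{T}_{c}\Pi_{ba}$ (already used in the proof of Lemma \ref{constancy of mean curvature}) gives $R_{1abc}=0$. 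Therefore $W_{1abc}=0$ for all $a,b,c\in\{2,3,4\}$.

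The remaining step is purely algebraic: I would use the relations \eqref{definition of anti-self-dual} together with the pair symmetry $W_{ijkl}=W_{klij}$ to write every component of $W$ as $\pm W_{1abc}$ with $a,b,c\in\{2,3,4\}$. For instance $W_{1212}=-W_{3412}=-W_{1234}$, $W_{1313}=W_{2413}=W_{1324}$, $W_{1414}=-W_{2314}=-W_{1423}$, and likewise $W_{1213},W_{1214},W_{1314}$ reduce to such components; and the purely ``fiber'' components come out the same way, e.g. $W_{2323}=-W_{1423}$, $W_{2424}=W_{1324}$, $W_{3434}=-W_{1234}$, $W_{2324}=-W_{1424}$, $W_{2334}=-W_{1434}$, $W_{2434}=W_{1334}$. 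Since this exhausts all components of $W$ in the chosen frame up to its symmetries, they all vanish, and so $W=0$ at $q$, which proves Lemma \ref{nondegenerace implies W vanishes}.

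I do not expect a single hard estimate here; the delicate point is rather making the hypotheses dovetail. One needs $e_1=\nabla f/|\nabla f|$ to be \emph{simultaneously} the unit normal of the level set (so that the Codazzi identity applies and the umbilicity information of Lemma \ref{constancy of mean curvature} is usable) \emph{and} a Ricci eigenvector with $R_{1j}=0$ (so that the Weyl correction terms in $W_{1abc}$ disappear); these are precisely the outputs of Lemmas \ref{Ric has at most two eigenvalues} and \ref{constancy of mean curvature}. Granting that, the only remaining work is the finite, somewhat tedious bookkeeping that anti-self-duality propagates the vanishing of the components $W_{1abc}$ to the whole Weyl tensor.
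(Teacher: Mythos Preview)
Your proposal is correct and follows essentially the same route as the paper: first obtain $R_{1abc}=0$ from the Codazzi identity together with the umbilicity and constancy of $\eta$ from Lemma~\ref{constancy of mean curvature}, then use the curvature decomposition and the Ricci structure of Lemma~\ref{Ric has at most two eigenvalues} to get $W_{1abc}=0$, and finally let the anti-self-duality relations propagate this to all of $W$. The paper packages the second step via the Schouten tensor (observing $(A\owedge g)_{1abc}=0$) and is terser in the final algebraic step, but the argument is the same.
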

\begin{proof}{of Lemma \ref{nondegenerace implies W vanishes}:}\\
From the proof of lemma \ref{constancy of mean curvature} we have known what $R_{1ijk}\equiv 0$ at $q$, if $\{i, j, k\}=\{2, 3, 4\}$. Recall we have the curvature decomposition: \begin{equation}\label{curvature decomposition} R_{ijkl}=\frac{1}{2}[A_{ik}g_{jl}+g_{ik}A_{jl}-A_{il}g_{jk}-g_{il}A_{jk}]+W_{ijkl}.\end{equation}
 The $A$ is the Schouten tensor:
 \begin{equation*}\label{Definition of schouten tensor}A=\frac{2}{n-2}(R_{ij}-\frac{R}{2n-2}g_{ij})
    \end{equation*}

 Since $e_1,\ e_2,\ e_3,\ e_4$ are all eigenvectors of $Ric$, then they are also eigenvectors of $A$. Then we have
  $$(A\otimes g)_{1abc}=0, \textrm{for all}\ a,b,c=\{2,3,4\}.$$
 Thus  from \ref{curvature decomposition} and our assumption that $W^+=0$, we have:
\begin{equation}\label{W^- vanishes}W^{-}_{1abc}=0, \textrm{for all}\ a,b,c=\{2,3,4\}.\end{equation}
A well known property of $W^{-}$ says that for any $k,l=1,2,3,4$, we have  \begin{equation}\label{property of W^-}W^{-}_{12kl}=-W^{-}_{34kl},\ W^{-}_{13kl}=-W^{-}_{42kl},\ W^{-}_{14kl}=-W^{-}_{23kl}\end{equation}
 Thus combining \ref{W^- vanishes} and \ref{property of W^-} we have
$$W^{-}\equiv 0,$$ which means $g$ actually has vanishing Weyl tensor at $q$.
\end{proof}
Next we should quote two statements which are very important to us.
\begin{thm}{(Cao-Chen\cite{Cao Cheng})}\label{steady solitons with vanishing Weyl tensor} Suppose $n\geq 4$. Any complete n-dim gradient steady  soliton with vanishing Weyl tensor must be either flat  or isometric to the Bryant soliton.
\end{thm}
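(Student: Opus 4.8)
The plan is to reproduce the argument of Cao--Chen \cite{Cao Cheng}. Since $n\geq 4$ and $W\equiv 0$, the Cotton tensor vanishes identically, because $C_{ijk}=-\frac{n-3}{n-2}W_{ijkl,l}=0$. Running the same computations as in the proofs of Lemmas \ref{Ric has at most two eigenvalues} and \ref{constancy of mean curvature} --- which only become cleaner when the \emph{full} Weyl tensor vanishes rather than just $W^{+}$ --- one gets, at every regular point $q$ where $\nabla f(q)\neq 0$, that $e_1=\nabla f/|\nabla f|$ is a Ricci eigenvector, that $\mathrm{Ric}$ restricted to $(\nabla f)^{\perp}$ equals $\mu\, g$ for a function $\mu$, and that the level hypersurface $\Sigma_c=\{f=c\}$ through $q$ is totally umbilic with mean curvature constant along $\Sigma_c$. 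Combining this eigenvalue structure with the Gauss and Codazzi equations and with $W=0$ (via the curvature decomposition \eqref{curvature decomposition}, as in Lemma \ref{nondegenerace implies W vanishes}), one shows moreover that all sectional curvatures of $\Sigma_c$ coincide, i.e.\ $\Sigma_c$ is an open piece of a space form.

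Next I would promote this local picture to a warped-product structure on the open dense set $M_0=\{\nabla f\neq 0\}$. Using the steady-soliton identities $R+|\nabla f|^2=c_0$ (Hamilton) and $\nabla R=2\,\mathrm{Ric}(\nabla f)=2\lambda\nabla f$, one sees that $R$, $\lambda$, $\mu$ and $|\nabla f|$ are all functions of $f$ alone along the gradient flow, so introducing $r$ by $dr=|\nabla f|^{-1}df$ gives $g=dr^2+\phi(r)^2\bar g$ with $\bar g$ a fixed constant-curvature metric on a level set, and $f=f(r)$, $\phi=\phi(r)$. Substituting this ansatz into $R_{ij}+\nabla_i\nabla_j f=0$ produces a closed first/second order ODE system for the pair $(\phi,f)$, into which one also feeds the constraint $R+|\nabla f|^2=c_0$ and the a priori bound $R\geq 0$ coming from B.-L. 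Chen's theorem (cf.\ \cite{BChen}).

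Finally I would analyze the global structure. If $c_0=0$ then $R\equiv 0$, so the soliton is Ricci flat, and since $W=0$ this forces $\mathrm{Rm}\equiv 0$ and $(M,g)$ is flat. If $c_0>0$ and $f$ has a critical point $p$, then the usual soliton arguments (strict convexity of $f$ near $p$, properness of $f$) show $p$ is the unique critical point, $M$ is diffeomorphic to $\mathbb R^n$, the fiber space form has positive curvature with $\phi(r)\to 0$ as $r\to 0^{+}$, and the ODE system has --- for each normalization --- a unique complete solution, which is precisely the Bryant soliton of \cite{Bryant}. If $c_0>0$ but $f$ has no critical point, one must show $M\cong\Sigma\times\mathbb R$ is impossible for a complete steady soliton (the ODE system admits no complete non-flat solution of this type), so the only possibilities are the flat metric and the Bryant soliton. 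The main obstacle is exactly this last part: carrying out the ODE analysis while using completeness to exclude incomplete or ``cigar-like'' profiles, and patching the local warped-product description across the (possibly nonempty) critical set of $f$ --- this is where completeness is genuinely used.
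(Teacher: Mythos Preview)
There is nothing to compare: in this paper the theorem is \emph{quoted} from Cao--Chen \cite{Cao Cheng} and stated without proof (it is used as a black box in the proof of Theorem~\ref{classification of steady soliton}). So the paper contains no argument of its own for this statement.

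That said, your sketch is a fair outline of the actual Cao--Chen proof and is on the right track: vanishing of the Cotton tensor gives the two-eigenvalue structure of $\mathrm{Ric}$ and total umbilicity of the level sets (your invocation of Lemmas~\ref{Ric has at most two eigenvalues} and \ref{constancy of mean curvature} is appropriate, since those lemmas are themselves modeled on Cao--Chen); the Gauss equation together with $W=0$ then makes each regular level set a space form; the soliton identities show all relevant scalars depend only on $f$, yielding a warped product $dr^2+\phi(r)^2\bar g$; and the classification is finished by an ODE analysis plus Bryant's uniqueness among rotationally symmetric steady solitons. Your identification of the ``main obstacle'' is accurate: ruling out complete non-flat warped products without a critical point of $f$, and controlling the behavior near the (unique) critical point so that the fiber is a round sphere rather than a flat or hyperbolic space form, is where Cao--Chen do the real work. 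One small correction: in the no-critical-point case you should not assume the warped product is an isometric product $\Sigma\times\mathbb R$; rather, one shows directly from the ODE system and $R\ge 0$ that no complete non-flat solution exists on all of $\mathbb R$.
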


\begin{prop}{(B.L.Chen\cite{BChen})} \label{ancient solutions have nonnegative R} Any  complete ancient
solution to the Ricci flow has nonnegative scalar curvature everywhere.
\end{prop}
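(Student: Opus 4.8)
The plan is to deduce the statement from a \emph{purely local} lower bound for the scalar curvature along the Ricci flow, and then to let the initial time tend to $-\infty$. The local bound I would aim for is the following: there is a dimensional constant $c_n$ such that if $g(t)$, $t\in[0,T]$, solves the Ricci flow on $M$ and $B_{g(0)}(p,r)$ is relatively compact in $M$, then
$$R(p,t)\ \ge\ -c_n\Big(\frac{1}{t}+\frac{1}{r^2}\Big),\qquad t\in(0,T].$$
Granting this, the Proposition follows at once: if $(M,g(t))$ is a complete ancient solution and we fix $p\in M$ and $t_0\le 0$, then for every $s<t_0$ the solution on the time interval $[s,t_0]$ and every radius $r$ give (after a time translation, with $p$ at the final time) $R(p,t_0)\ge -c_n\big(\tfrac{1}{t_0-s}+\tfrac{1}{r^2}\big)$, completeness of $g(s)$ ensuring the relative compactness of $B_{g(s)}(p,r)$; letting $r\to\infty$ and then $s\to-\infty$ yields $R(p,t_0)\ge 0$, and $(p,t_0)$ is arbitrary.

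To prove the local estimate I would argue by the maximum principle against the barrier $h(t)=c_n(\tfrac1t+\tfrac1{r^2})$, using the trace of the curvature evolution equation
$$\partial_t R\ =\ \Delta R+2|\mathrm{Ric}|^2\ \ge\ \Delta R+\tfrac{2}{n}R^2 .$$
On a relatively compact ball $R(\cdot,0)$ is bounded, so $R+h>0$ near $t=0$; the point is to propagate this. If $R+h$ were to reach $0$ first at an \emph{interior} space-time point $(x,t)$ with $t>0$, then there $\nabla R=0$, $\Delta R\ge0$, $\partial_t R\le h'(t)<0$, while the inequality above forces $\partial_t R\ge \tfrac{2}{n}R^2=\tfrac{2}{n}h^2\ge 0$, a contradiction. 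So the first zero must occur on the lateral boundary $\partial B(p,r)$, and to exclude this I would run the argument with a spatial cutoff $\phi$ vanishing on $\partial B(p,r)$, applying the maximum principle to $\phi^2(R+h)$ and absorbing the extra terms coming from $\Delta\phi$, $\partial_t\phi$ and $|\nabla\phi|^2/\phi$ into the good term $\tfrac{2}{n}\phi^2 R^2$ at the expense of enlarging $c_n$.

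The hard part will be precisely this last step: since both the Laplacian and the distance function $d_{g(t)}(p,\cdot)$ move with the flow, choosing and estimating the cutoff requires control of the geometry near $p$ over the whole interval --- essentially Hamilton--Perelman type bounds on $(\partial_t-\Delta)d_{g(t)}(p,\cdot)$ that use only a local Ricci bound, which must itself be produced by a localization or continuity-in-$t$ bootstrap rather than assumed. If one is willing to assume bounded curvature, as in Theorem \ref{classification of shrinking soliton}, all of this collapses: the global maximum principle applies directly, $\tfrac{d}{dt}R_{\min}\ge\tfrac{2}{n}R_{\min}^2$ in the barrier sense, and integrating this ODE backwards from any negative value of $R_{\min}(t_0)$ forces $R_{\min}$ to become $+\infty$ in finite backward time, which is absurd; so the entire difficulty lies in removing the boundedness hypothesis.
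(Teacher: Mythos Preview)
The paper does not prove this proposition at all; it is simply quoted from B.-L.~Chen \cite{BChen} and used as a black box in the proof of Lemma~\ref{unique continuation of steady soliton}. So there is nothing to compare against in the paper itself. What you have sketched is, in outline, exactly Chen's own argument: a purely local lower bound $R\ge -c_n(t^{-1}+r^{-2})$ obtained via a localized maximum principle for the supersolution inequality $(\partial_t-\Delta)R\ge \tfrac{2}{n}R^2$, followed by sending $r\to\infty$ and the initial time to $-\infty$. You are also right that the entire difficulty is the cutoff step, and that under a global curvature bound the result is an easy ODE comparison for $R_{\min}$.

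One small correction to your interior barrier computation: at a first zero of $R+h$ (coming from positive values) you have $\partial_t(R+h)\le 0$, hence $\partial_t R\le -h'(t)=c_n t^{-2}>0$, not $\partial_t R\le h'(t)<0$ as you wrote. The contradiction then comes from comparing this with $\partial_t R\ge \tfrac{2}{n}R^2=\tfrac{2}{n}h^2\ge \tfrac{2}{n}c_n^2 t^{-2}$, which is impossible once $c_n>n/2$. This is the mechanism that fixes the constant; your phrasing suggested a sign clash that is not there.

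Finally, you correctly flag that the cutoff step needs control on $(\partial_t-\Delta)d_{g(t)}$ without assuming curvature bounds. In Chen's paper this is handled by a continuity argument: one works on a maximal time interval on which a suitable curvature bound holds on the relevant ball, uses that bound to run the Perelman-type Laplacian comparison for the cutoff, derives the scalar curvature lower bound, and feeds it back (together with local derivative estimates) to extend the interval. Your outline identifies this bootstrap as ``the hard part'' but does not indicate how to close it; that closure is the substance of Chen's proof, so as written your proposal is an accurate roadmap rather than a proof.
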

Based on Proposition \ref{ancient solutions have nonnegative R}, Bing Wang has an observation.
\begin{lem}{(Bing Wang \cite{Bing Wang})}\label{unique continuation of steady soliton} Suppose $(M,g,f)$ is a gradient steady Ricci soliton i.e:
$$Ric+\nabla\nabla f=0.$$
If  \ $|\nabla f|^2=0$ over an open set, then $f\equiv constant$ over the whole manifold $M$ and $g$ is Ricci flat.
\end{lem}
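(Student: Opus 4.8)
The plan is to combine the conserved quantity for steady solitons with B.-L. Chen's scalar curvature estimate (Proposition \ref{ancient solutions have nonnegative R}). First I would recall that a complete gradient steady Ricci soliton $(M,g,f)$ generates an eternal solution of the Ricci flow: the soliton vector field $\nabla f$ is complete on a complete gradient steady soliton, and if $\phi_t$ denotes its flow then $g(t)=\phi_t^*g$ solves $\partial_t g=-2\,Ric$ for all $t\in\mathbb{R}$, since $\partial_t g(t)=\phi_t^*(\mathcal L_{\nabla f}g)=\phi_t^*(2\nabla\nabla f)=-2\,Ric(g(t))$. In particular $g(t)$ is an ancient solution with every time slice complete (each $g(t)$ is isometric to $g$), so Proposition \ref{ancient solutions have nonnegative R} yields $R\ge 0$ on all of $M$.

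Next I would record the well-known Hamilton identity, obtained by contracting the second Bianchi identity against the soliton equation exactly as in the derivation of $2R_{jl}f_l=R_j$ used above: differentiating $|\nabla f|^2$ and using $R_{ij}+\nabla_i\nabla_j f=0$ together with $\Delta f=-R$ gives $\nabla_i(R+|\nabla f|^2)=-\nabla_i R+2\nabla_i\nabla_j f\,\nabla_j f+\nabla_i R=0$, hence $R+|\nabla f|^2\equiv C$ for some constant $C$ on the connected manifold $M$.

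Now assume $|\nabla f|^2\equiv 0$ on a nonempty open set $U$. On $U$ the function $f$ is locally constant, so $\nabla\nabla f\equiv 0$ there, and the soliton equation forces $Ric\equiv 0$ on $U$; in particular $R\equiv 0$ on $U$. Evaluating $R+|\nabla f|^2=C$ at a point of $U$ gives $C=0$, so $R+|\nabla f|^2\equiv 0$ on all of $M$. Since $R\ge 0$ everywhere by the first step and $|\nabla f|^2\ge 0$ trivially, we conclude $R\equiv 0$ and $|\nabla f|^2\equiv 0$ on $M$. Therefore $\nabla f\equiv 0$, so $f$ is constant on $M$, and $Ric=-\nabla\nabla f\equiv 0$, i.e. $g$ is Ricci flat.

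The one step deserving care — and the place to be explicit in the write-up — is the first: verifying that $\nabla f$ is a complete vector field so that the associated Ricci flow is genuinely defined for all negative times, which is exactly what makes Proposition \ref{ancient solutions have nonnegative R} applicable. Once that is in place the rest of the argument is just the two-line algebraic combination above, and no PDE unique-continuation input is actually needed despite the name of the lemma.
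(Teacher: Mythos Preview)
Your proof is correct and follows essentially the same route as the paper: both use the conserved quantity $R+|\nabla f|^2\equiv C$, evaluate on the open set where $\nabla f$ vanishes to get $C=0$, and then invoke B.-L.~Chen's result $R\ge 0$ to force $R\equiv 0$ and $|\nabla f|^2\equiv 0$ globally. The paper simply asserts that Chen's result applies to steady solitons without bounded curvature, whereas you are more explicit (and rightly cautious) about the completeness of $\nabla f$ needed to generate the ancient flow; otherwise the arguments are identical.
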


\begin{proof}{of Lemma \ref{unique continuation of steady soliton}:}\\ Suppose
 $|\nabla f|^2$ vanishes in a ball $B_{v}$. By definition of steady soliton,  we have $R
\equiv 0$ in the $B_{v}$. Since $R+|\nabla f|^2=Constant$ over the manifold $M$, then over $B_{v}$ we obviously have $C=0$. Thus
$$R+|\nabla f|^2=0 \ \textrm{over the manifold M}.$$
But according to B.Chen's result \ref{ancient solutions have nonnegative R}, any steady Ricci soliton (not necessarily with bounded curvature) has $R\geq 0$, then $R\equiv 0\equiv |\nabla f|^2$.
Which means $f\equiv C_1$ and $(M,\ g)$ is Einstein.
\end{proof}
Now we are ready to prove the first main theorem.
\begin{proof}{\ of Theorem \ref{classification of steady soliton}:}\\ First we assume the set $\{|\nabla f|^2\neq 0\}$ is dense in $M$, then according to lemma \ref{nondegenerace implies W vanishes}, $W=0$ over a dense set. By continuity of the Weyl tensor we have $W=0$ everywhere. By Cao-Chen's  classification \ref{steady solitons with vanishing Weyl tensor}, we are in  case (1) or (2) in Theorem \ref{classification of steady soliton}.\\ \\
If the above does not hold, then
 $|\nabla f|^2$ vanishes in a ball $B_{v}$. By lemma \ref{unique continuation of steady soliton} $f$ is a constant over $M$ and $g$ is Ricci flat. Therefore we are in case (2) of Theorem \ref{classification of steady soliton}.
\end{proof}
\begin{rmk} One should notice that if the curvature is uniformly bounded, the fact $Ric\equiv 0$ over the whole manifold $M$ provided that $|\nabla f|^2$ vanishes in a ball is also directly implied by lemma \ref{analyticity of Solitons}.
\end{rmk}
\begin{rmk}\label{Ricci flat antiselfdual manifolds} One might be interested in the 2nd case in Theorem \ref{classification of steady soliton}. Actually in the special case that $(M,g)$ is $ALE$ and hyper-K\"ahler, Kronheimer obtained a very nice classification (see  \cite{P.B.Kronheimer}). The Eguchi-Hanson metric is  in Kronheimer's classification. Note that hyper-K\"ahlerity implies  $(M,g)$ is half conformal flat and Ricci flat. But if $(M,g)$ is not $ALE$, there are still a lot of possibilities. In Tian-Yau \cite{Tian Yau1}, it's showed explicitly that there exists a complete noncompact 4-dim K\"ahler manifold which is Ricci flat  and has volume growing in the rate of $r^{\frac{4}{3}}$. Note again K\"ahler manifolds with zero Ricci curvature is automatically half conformally flat.  Furthermore, Tian-Yau (see \cite{Tian Yau1} and \cite{Tian Yau2}) constructed a vast class of complete noncompact K\"ahler manifolds of finite topological type. On the other hand,  M. Anderson,
P. Kronheimer and Le Brun  constructed such examples with
infinite topological type. There should be other constructions which the authors don't know.
\end{rmk}
\section{Proof of Theorem  \ref{classification of shrinking soliton}.}
  Again we  assume $(M,g)$ is anti-self-dual.  The basic equation is
   $$Ric+\nabla\nabla f=g.$$
   The situation is essentially the same as the proof of Theorem \ref{classification of steady soliton}, the difference is that we have to apply lemma \ref{analyticity of Solitons} thus we need the curvature to be uniformly bounded.
   First by the same proof  of lemma \ref{Ric has at most two eigenvalues} we also have:
   \begin{lem}\label{Ric has at most two eigenvalues shrinking } Suppose $(M,g)$ is an anti-self-dual gradient shrinking Ricci soliton. Suppose $q\in M$ is not a critical point of $f$ i.e  $|\nabla f|(q)\neq 0$. Then at the point $q$,    $e_1=\nabla f /|\nabla f |$ is an eigenvector of $Ric$.
Furthermore, there exists two constants $\lambda$ and $\mu$ such that for any orthonormal basis $e_2,\ e_3,\ e_4$ tangent to the level surface of $f$  at $p$, the
Ricci tensor has the following properties:
\begin{enumerate}
  \item  $Ric(e_1, e_1)=\lambda$,

  \item $Ric(e_i, e_j)=\mu\delta_{ij}, \  i, j=2, 3, 4$.
  \item
 $Ric(e_1, e_j)=R_{1j}=0, \ j=2, 3, 4$,
\end{enumerate}

\end{lem}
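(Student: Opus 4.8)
The idea is to repeat the argument of Lemma~\ref{Ric has at most two eigenvalues} essentially verbatim, checking that the only two places where the soliton equation is used are insensitive to the value of $\rho$. Set $n=4$. The Cotton tensor obeys the two purely Riemannian identities
$$C_{ijk}=-\frac{n-3}{n-2}W_{ijkl,l},\qquad C_{ijk}=R_{jk,i}-R_{ik,j}-\frac{1}{2(n-1)}(R_ig_{jk}-R_jg_{ik}),$$
neither of which sees the soliton structure. From $R_{ij}+\nabla_i\nabla_j f=\rho g_{ij}$ and the fact that $\rho g$ is parallel we still get $R_{jk,i}=-f_{jki}$, exactly as when $\rho=0$; substituting this into the second identity, commuting covariant derivatives to produce the term $-R_{ijkl}f_l$, decomposing $R_{ijkl}$ into Weyl plus Schouten as in \eqref{curvature decomposition}, and using the soliton identity $2R_{jl}f_l=R_j$ leads word for word to
$$C_{ijk}=-W_{ijkl}f_l+B_{ijk},$$
with $B_{ijk}$ the same algebraic tensor in $Ric$, $R$, $\nabla f$ and $g$ as in the steady case. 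The identity $R_j=2R_{jl}f_l$ remains valid for shrinkers: from the conserved quantity $R+|\nabla f|^2-2\rho f$ (which is constant) and $f_{jl}=\rho g_{jl}-R_{jl}$ one has $R_j+2\rho f_j-2R_{jl}f_l=2\rho f_j$, and the $\rho$-terms cancel.

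Next I would diagonalize $Ric$ at $q$ in an orthonormal frame $e_1,\dots,e_4$ and invoke $W^+=0$. As in the steady case, anti-self-duality gives
$$B_{12k}+B_{34k}=C_{12k}+C_{34k}+(W_{12kl}+W_{34kl})f_l=0,$$
since the middle expression equals $-\frac{n-3}{n-2}(W_{12kl}+W_{34kl})_{,l}+(W_{12kl}+W_{34kl})f_l$ and $W_{12kl}+W_{34kl}$ belongs to the self-dual part $W^+$, which vanishes; likewise $B_{13k}+B_{42k}=0$ and $B_{14k}+B_{23k}=0$. Since $B_{ijk}=0$ whenever $i,j,k$ are mutually distinct and $B_{iij}=0$ by inspection of its definition, these relations force $B\equiv0$. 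Reading off $B_{iji}=0$ gives, exactly as in Lemma~\ref{Ric has at most two eigenvalues}, the system $\frac{f_i}{6}(R-3\lambda_j-\lambda_i)=0$ for all $i\neq j$, where $\lambda_1,\dots,\lambda_4$ are the eigenvalues of $Ric$. If $\nabla f$ has two or more nonzero components all the $\lambda_k$ agree; if it has exactly one, say $f_1\neq0$, then $\lambda_2=\lambda_3=\lambda_4$. In both cases $e_1=\nabla f/|\nabla f|$ is an eigenvector of $Ric$ and $Ric$ restricted to $(\nabla f)^\perp$ equals $\mu\,\delta_{ab}$, which is precisely conclusions (1)--(3).

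I do not expect a genuine obstacle here; the whole content is the bookkeeping observation that the derivations of $C_{ijk}=-W_{ijkl}f_l+B_{ijk}$ and of $R_j=2R_{jl}f_l$ never used $\rho=0$, so the steady argument transplants directly. The only step that deserves to be written out carefully is verifying these two $\rho$-independences; once that is recorded, everything is formally identical to the proof of Lemma~\ref{Ric has at most two eigenvalues} and to the computations of Cao--Chen~\cite{Cao Cheng}.
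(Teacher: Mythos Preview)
Your proposal is correct and matches the paper's approach exactly: the paper states only that the proof is ``the same proof'' as Lemma~\ref{Ric has at most two eigenvalues}, and you have carried out precisely that transplant, additionally supplying the verification that the two uses of the soliton equation (namely $R_{jk,i}=-f_{jki}$ and $2R_{jl}f_l=R_j$) are insensitive to the constant $\rho$. There is nothing to add.
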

Therefore we also have the  following lemma by the same proof of lemma \ref{nondegenerace implies W vanishes}:
   \begin{lem}\label{nondegenerace implies W vanishes of shrinking case }Let $(M,g,f)$ be a 4-dim complete  gradient shrinking Ricci soliton  with bounded curvature and $W^+=0$. Suppose $q\in M$ and $|\nabla f|^2(q)\neq 0$, then  $W= 0$ at $q$.
\end{lem}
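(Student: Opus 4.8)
The plan is to reproduce, essentially line for line, the chain of arguments used for the steady case (Lemmas \ref{Ric has at most two eigenvalues}--\ref{nondegenerace implies W vanishes}); the shrinking constant $\rho=1$ enters only through pure-trace terms that do no harm. The starting point, Lemma \ref{Ric has at most two eigenvalues shrinking }, is already available: at $q$ the unit vector $e_1=\nabla f/|\nabla f|$ is an eigenvector of $Ric$, and for any orthonormal frame $e_2,e_3,e_4$ of the level set $K_c=\{f=c\}$ through $q$ one has $R_{1j}=0$ and $R_{ij}=\mu\,\delta_{ij}$ for $i,j\in\{2,3,4\}$. Using the soliton equation in the form $\nabla_i\nabla_j f=g_{ij}-R_{ij}$, the second fundamental form of $K_c$ satisfies, for $i,j\in\{2,3,4\}$,
\[
\Pi_{ij}=\frac{\nabla_i\nabla_j f}{|\nabla f|}=\frac{g_{ij}-R_{ij}}{|\nabla f|}=\frac{1-\mu}{|\nabla f|}\,g_{ij},
\]
so $\Pi$ is a pointwise multiple of the induced metric, exactly as in Lemma \ref{constancy of mean curvature}.

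The next step is to show this multiple is constant along $K_c$. Writing $\Pi_{ij}=c\,g_{ij}$ with $c$ a function on $K_c$, the Codazzi equation gives $R_{1ijk}=-\nabla^{T}_j\Pi_{ki}+\nabla^{T}_k\Pi_{ji}=-(\nabla^{T}_j c)g_{ki}+(\nabla^{T}_k c)g_{ji}$ for $i,j,k\in\{2,3,4\}$; contracting $i$ with $k$ and invoking $R_{1j}=0$ from Lemma \ref{Ric has at most two eigenvalues shrinking } yields $-2\,\nabla^{T}_j c=0$, hence $c$ is constant on $K_c$ and therefore $R_{1ijk}=0$ for all $i,j,k\in\{2,3,4\}$. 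I note that the bounded-curvature hypothesis plays no role in this lemma; it is needed only for the global, analyticity-based step in the proof of Theorem \ref{classification of shrinking soliton}.

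Finally, since $e_1,e_2,e_3,e_4$ diagonalize $Ric$, they diagonalize the Schouten tensor $A$ as well, so the Kulkarni--Nomizu term in the curvature decomposition \ref{curvature decomposition} obeys $(A\otimes g)_{1abc}=0$ for $a,b,c\in\{2,3,4\}$. Combining this with $R_{1abc}=0$ and the hypothesis $W^+=0$, the decomposition \ref{curvature decomposition} forces $W^{-}_{1abc}=0$ for all $a,b,c\in\{2,3,4\}$, i.e. identity \ref{W^- vanishes}. Feeding these into the algebraic first-Bianchi symmetries of $W^{-}$ recorded in \ref{property of W^-} propagates the vanishing to every component, so $W^{-}\equiv0$ at $q$; together with $W^+=0$ this gives $W=0$ at $q$.

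The only point requiring genuine (if routine) checking is that the shrinking constant does not interfere with Lemma \ref{Ric has at most two eigenvalues shrinking }: in the Cotton-tensor computation the substitution $R_{jk}+f_{jk}=g_{jk}$ produces extra terms proportional to $g_{jk}f_i-g_{ik}f_j$, which vanish whenever $i,j,k$ are mutually distinct, so the conclusion ``$B_{ijk}=0$ for distinct $i,j,k$'', and with it the two-eigenvalue structure of $Ric$, survives unchanged. With Lemma \ref{Ric has at most two eigenvalues shrinking } granted, no further obstacle remains; the argument is purely local at $q$ and algebraic-plus-Codazzi in nature.
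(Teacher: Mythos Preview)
Your proof is correct and follows precisely the route the paper takes: it simply reruns Lemmas \ref{Ric has at most two eigenvalues}--\ref{nondegenerace implies W vanishes} with the harmless extra trace term from $\rho=1$, which is exactly what the paper means by ``by the same proof.'' One terminological quibble: the relations in \eqref{property of W^-} that you invoke at the end are the anti-self-duality identities for $W^{-}$, not ``first-Bianchi symmetries''; the logic is unaffected.
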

Next we recall a result of Bando in \ref{analyticity of Ricci flows} on analyticity of Ricci flows.
\begin{thm}{(Bando)}\label{analyticity of Ricci flows}Suppose $\{M,g(t),\ t\in[0,T]\}$ is a complete Ricci flow with bounded curvature, then $g(t)$ is analytic in polar coordinates for $t>0$.
\end{thm}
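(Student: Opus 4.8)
The plan is to upgrade Shi's interior smoothing estimates into the sharp factorial growth rate of spatial derivatives that characterises real-analyticity. Fix $t_0\in(0,T]$ and a point $p$; by the standard criterion it suffices to produce a ball of definite radius about $p$ together with a constant $A<\infty$ such that
\[
|\partial^{\alpha}g_{ij}|(x,t_0)\ \le\ A^{|\alpha|+1}\,|\alpha|!
\]
in geodesic polar coordinates, for then the Taylor expansion of $g(\cdot,t_0)$ converges on a ball of radius $\sim A^{-1}$. Shi's local derivative estimates already show that $|\mathrm{Rm}|\le K$ on $M\times[0,T]$ forces $\sup_{M}|\nabla^{m}\mathrm{Rm}|(\cdot,t)\le C(n,m,K)\,t^{-m/2}$ for every $m$, so for $t\ge t_0$ all covariant derivatives of curvature are bounded; the whole difficulty is the \emph{dependence of the constant on the order $m$}.

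I would first rewrite the flow as a genuine quasilinear parabolic system: either pass to harmonic coordinates, which exist on a ball of definite size under the bounded-curvature hypothesis (Jost--Karcher) and in which $-2R_{ij}=g^{kl}\partial_{k}\partial_{l}g_{ij}+Q_{ij}(g,\partial g)$ with $Q$ a universal rational-in-$g$, quadratic-in-$\partial g$ expression, or equivalently pass to the Ricci--DeTurck flow, which has exactly this form, and pull back at the end by the DeTurck diffeomorphisms. Differentiating this system $m$ times and applying the Leibniz rule, $\nabla^{m}$ of the metric (equivalently of $\mathrm{Rm}$) solves a parabolic equation whose right-hand side is a sum of $O(m)$ terms of the schematic form $\binom{m}{j}\,\nabla^{j}(\,\cdot\,)*\nabla^{m-j}(\,\cdot\,)$. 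The core step is an induction on $m$: feeding in the inductive bound $\|\nabla^{j}(\,\cdot\,)\|\le A^{j+1}j!$ for $j<m$ and running a weighted maximum principle and parabolic interpolation (a Bernstein-type estimate, as in Shi but with every constant tracked) on a shrinking nested family of parabolic cylinders about $(p,t_0)$, one obtains a recursion of the type $A_{m}\lesssim\sum_{j=0}^{m}A_{j}A_{m-j}$ in which the loss per step is a \emph{fixed} multiplicative constant beyond the binomial combinatorics; such recursions have solutions of order $A^{m+1}m!$, which closes the induction and yields the displayed Cauchy estimate. (Alternatively one could complexify the spatial variables and invoke the classical analyticity theory for parabolic systems à la Masuda.) Summing the Taylor series then gives analyticity in the space variables for each $t>0$.

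The main obstacle is exactly this book-keeping. One must verify that each differentiation step costs only a bounded factor --- over and above the unavoidable binomial coefficients --- and that the interpolation inequalities used to absorb the top-order second-derivative term do not themselves introduce an $m$-dependent blow-up, since otherwise one is left with a Gevrey bound $(m!)^{1+\varepsilon}$ rather than genuine real-analyticity. A secondary subtlety is that polar coordinates, like the harmonic or DeTurck gauge, are metric- and time-dependent; this is dealt with by fixing the coordinate system at the target time $t_0$ and noting that the gauge-changing maps are themselves spatially analytic with comparable radius, so analyticity in any one admissible coordinate system transfers to the statement as phrased.
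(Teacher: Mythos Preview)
The paper does not prove this statement at all: it is quoted as an external result due to Bando \cite{Bando} and then immediately applied to deduce Lemma~\ref{analyticity of Solitons}. So there is no ``paper's own proof'' to compare your proposal against.

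For what it is worth, your outline is essentially the approach of Bando's original paper: one proves by induction, with carefully tracked constants and a Bernstein/maximum-principle argument on nested cylinders, that the $m$-th covariant derivative of curvature obeys a Cauchy estimate $|\nabla^{m}\mathrm{Rm}|\le C\,A^{m}\,m!\,t^{-m/2}$, which at any fixed $t>0$ gives real-analyticity of the metric in suitable coordinates. Your identification of the key difficulty --- that each inductive step must cost only a fixed multiplicative factor beyond the binomial combinatorics, lest one end up merely with Gevrey regularity --- is exactly right, and the secondary gauge issue you raise (polar coordinates being metric-dependent) is also genuine but harmless, as you say. One minor historical note: Bando's paper predates Shi's, so Bando established the needed derivative estimates directly rather than ``upgrading'' Shi, but the substance is the same.
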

Bando's result directly implies the following lemma.
\begin{lem}\label{analyticity of Solitons}Let $(M,g,f)$ is a  complete Ricci soliton with bounded curvature, i.e: $$R_{ij}+\nabla_i\nabla_j f =\rho g_{ij}\ \ \textrm{for some constant $\rho$ and smooth function $f$}.$$
Suppose there exists an open set $\Omega$  in which  $|\nabla f|=0$, then $f$ is a constant  over the whole manifold $M$ and $(M,g)$ is Einstein.
\end{lem}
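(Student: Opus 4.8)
The plan is to upgrade $g$ to a real analytic metric using the Ricci flow the soliton generates, and then conclude by unique continuation applied to the analytic function $|\nabla f|^2$. First I would recall the canonical self-similar Ricci flow attached to $(M,g,f)$: with $\sigma(t)=1-2\rho t$ and $\psi_t$ the family of diffeomorphisms generated by $\sigma(t)^{-1}\nabla f$ (well defined on a maximal interval containing $0$, since complete gradient solitons have complete soliton vector field), the metrics $g(t):=\sigma(t)\,\psi_t^{*} g$ solve $\partial_t g(t)=-2\,\mathrm{Ric}(g(t))$ with $g(0)=g$. On a short interval $[0,\epsilon]$ the flow $g(t)$ differs from $g$ only by the bounded conformal factor $\sigma(t)$ and a diffeomorphism, so it has uniformly bounded curvature there; hence Theorem~\ref{analyticity of Ricci flows} applies and $g(\epsilon)$ --- and therefore $g$ itself, via the diffeomorphism $\psi_\epsilon$ --- is real analytic in polar (equivalently, after passing to harmonic) coordinates near every point. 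In particular $(M,g)$ carries a real analytic structure.

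Second, I would observe that the curvature tensor, and hence $\mathrm{Ric}$ and $R$, are real analytic, and that tracing the soliton identity gives the elliptic equation $\Delta_g f = n\rho - R$ with real analytic coefficients and real analytic right-hand side; analytic hypoellipticity of $\Delta_g$ then makes $f$, and hence $|\nabla f|^2 = g^{ij}\partial_i f\,\partial_j f$, a real analytic function on the connected manifold $M$. Since $|\nabla f|^2$ vanishes on the nonempty open set $\Omega$, unique continuation for real analytic functions forces $|\nabla f|^2 \equiv 0$ on $M$: concretely, the set of points where $|\nabla f|^2$ vanishes to infinite order is nonempty (it contains $\Omega$), closed by continuity of all the derivatives, and open because in a local analytic chart an analytic function with all derivatives vanishing at a point vanishes in a whole neighborhood of that point. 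Hence $f$ is constant, and the soliton equation collapses to $R_{ij}=\rho g_{ij}$, i.e. $(M,g)$ is Einstein.

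The step I expect to be the main obstacle is the first one: one must be sure that the self-similar Ricci flow genuinely exists near $t=0$ (this is where completeness of the soliton vector field for complete gradient solitons enters) and that ``bounded curvature of $g$'' transfers to ``bounded curvature of $g(t)$ on $[0,\epsilon]$'', so that Theorem~\ref{analyticity of Ricci flows} is legitimately applicable; once analyticity of $g$ is in hand, the analytic elliptic regularity for $f$ and the unique continuation argument are routine. (Note the statement tacitly assumes $M$ connected, which is needed for the identity theorem.)
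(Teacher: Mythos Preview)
Your argument is correct, and its first half --- setting up the self-similar Ricci flow with bounded curvature and invoking Theorem~\ref{analyticity of Ricci flows} to make $g$ real analytic --- is exactly what the paper uses (though the paper leaves the flow construction implicit).  The divergence is in the second step.  The paper does not appeal to analytic hypoellipticity of $\Delta_g$ at all: instead it picks a geodesic $\gamma$ starting in $\Omega$ and ending at a point where $f\neq C_1$, observes that $\gamma$ is a straight line in polar coordinates (hence an analytic curve), and uses the \emph{untraced} soliton equation to write $(f\circ\gamma)''(s)=\rho-\mathrm{Ric}(\dot\gamma,\dot\gamma)$, which is analytic in $s$; thus $f\circ\gamma$ is analytic in one variable and the identity theorem on $\mathbb{R}$ gives the contradiction.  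Your route establishes more --- full real analyticity of $f$ on $M$ --- but at the price of invoking the (classical but substantially heavier) analytic regularity theory for elliptic operators with analytic coefficients; the paper's geodesic trick sidesteps this entirely by reducing to a one-dimensional problem.
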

\begin{proof}{of Lemma \ref{analyticity of Solitons}:}\\ Obviously we have  $f=C_1$ in  $\Omega$, $C_1$ is a constant.  If the lemma does not hold, there exist a short geodesic $\gamma(s),\ s\in[0,s_0]$ with arclength parameter and a number $\upsilon$ such that
 $$f[\gamma(s)]=C_1, \textrm{when}\ {s<\upsilon}; \ f[\gamma(s_0)]\neq C_1 .$$
  Notice for all $p\in M$ and    any geodesic $\gamma$ passing through $p$, $\gamma$ is a straight line in the polar coordinate centered at $p$ and therefore analytic.  By the bounded curvature assumption and Bando's result, we know that $Ric(\dot{\gamma(s)},\ \dot{\gamma(s)} )$ is analytic in $s$, which means that   $f^{''}(s)$ is analytic in $s$.
 Then  $f$ is also analytic in $s$ which implies $f(s)\equiv C_1$ for all $s\in[0,s_0]$. This is a contradiction.
\end{proof}
Before proving Theorem \ref{classification of shrinking soliton}, we shall quote the main theorem in \cite{Zh2}.
\begin{thm}{(Z-H Zhang)}\label{Shrinking soliton with vanishing Weyl tensor}When $n\geq 4$,
any complete gradient shrinking soliton with vanishing Weyl tensor
must be the finite quotients of $R^n$, $S^{n-1}\times R$, or $S^n$.
\end{thm}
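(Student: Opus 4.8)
The plan is to show that the vanishing of the Weyl tensor forces the soliton to be rotationally symmetric on the regular set of $f$, and then to classify the resulting warped products by an ODE analysis together with the completeness of $g$. After normalizing the equation to $R_{ij}+\nabla_i\nabla_j f=\tfrac12 g_{ij}$, I would first record that in dimension $n\geq 4$ the condition $W=0$ is equivalent to $g$ being locally conformally flat, so in particular $W_{ijkl,l}=0$ and hence the Cotton tensor $C_{ijk}$ vanishes identically.

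Next I would run the Cotton-tensor computation as in the proof of Lemma \ref{Ric has at most two eigenvalues}, now for general $n$ and with the shrinking normalization. Since $C_{ijk}=0$ and $W=0$, the auxiliary tensor $B_{ijk}$ built from $R_{ij}$, $R$ and $\nabla f$ must vanish; reading off its components in a frame diagonalizing $Ric$ shows that at every regular point $e_1=\nabla f/|\nabla f|$ is a Ricci eigenvector with some eigenvalue $\lambda$, while $Ric$ restricted to $(\nabla f)^{\perp}$ equals $\mu\,g$ for a second eigenvalue $\mu$. Feeding this two-eigenvalue structure back into the locally conformally flat curvature decomposition, the sectional curvatures of planes tangent to a level set $\{f=c\}$ are all equal, and the mixed curvatures $K(e_1,e_i)$ are all equal; combined with the umbilicity of the level sets (the analog of Lemma \ref{constancy of mean curvature}, so that $\Pi_{ij}=\tfrac{\eta}{n-1}g_{ij}$ with $\eta$ constant on each level set) and the Gauss equation, this makes each level set a space form of constant curvature. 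Hence on the open dense regular set the metric takes the rotationally symmetric warped-product form $g=dr^2+\phi(r)^2\,\bar g$, where $\bar g$ has constant curvature $k$ and $f=f(r)$.

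With this normal form in hand I would substitute into the soliton equation to obtain a coupled ODE system for $\phi(r)$ and $f(r)$ involving the fiber curvature $k$, and then classify the complete solutions. The key inputs are that shrinking solitons are ancient solutions, so by Proposition \ref{ancient solutions have nonnegative R} the scalar curvature satisfies $R\geq 0$, together with the standard growth estimate that $f$ behaves like $\tfrac14 d(\cdot,p)^2$ at infinity and is therefore proper. These force $\phi$ to be defined on a maximal interval on which the warped product either closes up smoothly at a critical point of $f$ or continues to infinity, and the admissible complete profiles are exactly: $\phi$ constant with $k>0$, giving the round cylinder $S^{n-1}\times R$; a profile capping off smoothly at both ends with $k>0$, giving the round sphere $S^n$; and the degenerate flat profile $\mu=0$, giving the Gaussian shrinker on $R^n$. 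Passing to quotients by the isometry group acting freely then yields the finite-quotient statement.

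The main obstacle is the transition from the purely pointwise, two-eigenvalue Ricci structure to a \emph{global} rotationally symmetric model: one must show that the warped-product decomposition valid near a single regular point patches together across the connected regular set, that the fiber curvature constant $k$ agrees on all level sets, and---most delicately---that the structure extends smoothly across the critical points of $f$, so that the ODE analysis really captures a complete manifold rather than an incomplete piece. This is precisely the place where the completeness hypothesis is indispensable and where the nonnegativity $R\geq 0$ together with the properness of $f$ do the essential work of excluding exotic noncompact profiles. The remaining step---checking that the enumerated ODE solutions are the only complete ones and correspond to the three listed models---is then a finite case analysis.
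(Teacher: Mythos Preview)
This theorem is not proved in the present paper at all: it is quoted from Zhang \cite{Zh2} and invoked as a black box in the proof of Theorem~\ref{classification of shrinking soliton}. There is therefore no proof here to compare your proposal against.

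That said, your outline is a reasonable route to the result, essentially transplanting the Cao--Chen strategy for the steady case: from $W=0$ obtain the two-eigenvalue Ricci structure via the Cotton-tensor identity, upgrade to a warped product $dr^2+\phi(r)^2\bar g$ on the regular set of $f$, and classify the complete profiles by ODE. Zhang's actual argument in \cite{Zh2} proceeds differently: after obtaining the eigenvalue structure he shows that a complete gradient shrinking soliton with $W=0$ has nonnegative curvature operator, and then appeals to the earlier classification of such solitons (Ni--Wallach \cite{NW}, Petersen--Wylie \cite{PW}, Naber \cite{Na}) rather than carrying out a direct ODE analysis. Your approach trades that curvature estimate for an explicit warped-product classification; this is conceptually self-contained but leaves substantial work that you only sketch---in particular the global patching of the warped-product structure, the smooth extension across critical points of $f$, and the enumeration of complete ODE profiles, which you correctly identify as the delicate steps.
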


Now we are ready to prove Theorem \ref{classification of shrinking soliton}.
\begin{proof}{\ of Theorem \ref{classification of shrinking soliton}:}\\Similarly, we first assume the set $\{|\nabla f|^2\neq 0\}$ is dense in $M$ , then according to lemma \ref{nondegenerace implies W vanishes}, $W=0$ over a dense set. By continuity of the Weyl tensor we have $W=0$ everywhere. By Zhang's classification \ref{steady solitons with vanishing Weyl tensor}, $(M,g)$ must be the finite quotients of $R^4$, $S^3\times R$ or $S^4$.\\ \\
If the above does not hold, then
 $|\nabla f|^2$ vanishes in a ball $B_{v}$. By lemma \ref{analyticity of Solitons},  we have  that  $Ric\equiv g$ over the whole manifold $M$. Therefore $M$ is compact Einstein with positive scalar curvature and $W^+=0$, then it's necessarily isometric to $S^4$ or $CP^2$ according to a theorem of Hitchin(see theorem 13.30 of \cite{Besse}).
 Thus Theorem \ref{classification of shrinking soliton} is true.
\end{proof}

Xiuxiong Chen, Department of Mathematics, University of Wisconsin-Madison, Madison,
WI 53706, USA;\ \ xiu@math.wisc.edu\\ \\
Yuanqi Wang, Department of Mathematics, University of Wisconsin-Madison, Madison,
WI, 53706, USA;\ \ wangyuanqi1982@gmail.com

\end{document}